\documentclass[journal]{IEEEtran}
\usepackage{cite}
\usepackage{amsmath,amssymb,amsfonts,amsthm}
\usepackage{algorithm}
\usepackage{algorithmic}
\usepackage[algo2e,linesnumbered,ruled]{algorithm2e}
\usepackage{graphicx}
\usepackage{textcomp}
\def\BibTeX{{\rm B\kern-.05em{\sc i\kern-.025em b}\kern-.08em
    T\kern-.1667em\lower.7ex\hbox{E}\kern-.125emX}}
\usepackage{float}
\usepackage{color}

\usepackage[utf8]{inputenc}
\setcounter{MaxMatrixCols}{20}
\usepackage{mathtools}  
\usepackage{nccmath}

\usepackage{dsfont}
\usepackage{caption}
\usepackage{subcaption}
\theoremstyle{plain}
\newtheorem{theorem}{Theorem}
\newtheorem{definition}{Definition}
\newtheorem{assumption}{Assumption}

\newtheorem{remark}[theorem]{Remark}

\newtheorem{property}[theorem]{Property}

\newcommand{\mR}{{\mathbb R}}

\newcommand{\mP}{{\mathbb P}}

\newcommand{\mU}{{\mathbb U}}

\newcommand{\bF}{{\mathbf F}}

\newcommand{\bP}{{\mathbf P}}

\newcommand{\bM}{{\mathbf M}}

\newcommand{\bX}{{\mathbf X}}

\newcommand{\bx}{{\mathbf x}}

\newcommand{\bPsi}{{\boldsymbol \Psi}}

\newcommand{\bY}{{\mathbf Y}}
\newcommand{\bg}{{\mathbf g}}
\newcommand{\bA}{{\mathbf A}}

\newcommand{\bs}{{\mathbf s}}

\newcommand{\bc}{{\mathbf c}}
\newcommand{\bv}{{\mathbf v}}
\newcommand{\bw}{{\mathbf w}}

\newtheorem*{proof}{Proof}
\begin{document}

\title{A Convex Approach to Data-driven Optimal Control via Perron-Frobenius and Koopman Operators}
\author{Bowen Huang, Umesh Vaidya 
\thanks{
The research work was supported from NSF CPS award 1932458 and NSF award 2031573. U. Vaidya and B. Huang are with the Department of Mechanical Engineering, Clemson University, Clemson, SC 29631 USA (e-mail: uvaidya@clemson.edu).}}
 
\maketitle


\begin{abstract}

The paper is about the data-driven computation of optimal control for a class of control affine deterministic nonlinear systems. We assume that the control dynamical system model is not available, and the only information about the system dynamics is available in the form of time-series data. We provide a convex formulation for the optimal control problem of the nonlinear system. The convex formulation relies on the duality result in the dynamical system's stability theory involving density function and Perron-Frobenius operator. We formulate the optimal control problem as an infinite-dimensional convex optimization program. The finite-dimensional approximation of the optimization problem relies on the recent advances made in the Koopman operator's data-driven computation, which is dual to the Perron-Frobenius operator. Simulation results are presented to demonstrate the application of the developed framework.
\end{abstract}

\begin{IEEEkeywords}
Data-driven control, Convex optimization, Linear operator approach.
\end{IEEEkeywords}

\section{Introduction}

The development of data-driven approaches for solving optimal control problems (OCP) for the dynamical system has attracted tremendous interest from various research communities. 
The solution to OCP involves solving an infinite-dimensional nonlinear partial differential equation, namely Hamilton Jacobi Bellman (HJB) equation. 
The nonlinear and infinite-dimensional nature of the HJB equation makes the OCP challenging. Alternate formulations of OCP have been sought, which are more amicable to data-driven computation.

Progress is made for a class of linearly solvable OCP using alternate Kullback-Leibler (KL) based formulation of OCP for a stochastic dynamical system, and path integral-based numerical scheme \cite{kappen2007introduction,todorov2009efficient,theodorou2010generalized}. 
The convex approach to the OCP we propose for data-driven control draws a parallel with the following literature on this topic. 
A convex formulation to OCP as an infinite-dimensional linear program is proposed in \cite{hernandez,gaitsgory2009linear}. Duality relationship is used for convex formulation of OCP in \cite{vinter1993convex}. In \cite{Rantzer01}, the density function is introduced as dual to Lyapunov function for verifying weaker notion of almost everywhere (a.e.) stability. The density function is used in the convex formulation of stabilization and optimal control \cite{Prajna04,rantzer2003duality}. Similarly, convex dual formulation involving occupation measures via moment-based approach for control problem is studied in \cite{henrion2013convex,korda2016moment,korda2017convergence,lasserre2008nonlinear}.  One of our work's main distinguishing features is that we view the duality in the stability theory and control using tools from the linear operator theory. This viewpoint allows us to present novel convex formulation to OCP with variables having physical interpretation and is dynamical system motivated.
Furthermore, given the infinite time horizon nature of our OCP formulation, stability is central in our formulation. We introduce a new notion of a.e. uniform stability, stronger than a.e. stability \cite{Rantzer01,Vaidya_TAC}, and show that optimal control satisfies this property \cite{rajaram2010stability}.  Another main advantage of the linear operator viewpoint is that it allows us to use recent advances in the data-driven approximation of linear operators to provide a data-driven solution to the OCP. The operator theoretic viewpoint also enables us to extend the convex formulation to optimal control from deterministic to stochastic setting in a straightforward manner \cite{Vaidya_stochastic2020}.

The linear P-F and Koopman operators are used to lift nonlinear dynamics from state space to linear, albeit infinite-dimensional, dynamics in the space of functions. 
There is a growing body of literature on using these operators, especially the Koopman operator, for dynamical analysis and control. We refer the readers to the following references for more details on this topic \cite{kaiser2017data,huang2018feedback,arbabi2018data,ma2019optimal,korda2020optimal,mezic_koopman_stability,huang2020data}. Our approach differs from the existing methods involving the use of the Koopman operator for data-driven control. In particular, we perform bilinear lifting of the control system using the P-F operator, which is dual to the Koopman operator.
We address the control challenges associated with bilinear lifting of the control system using the convexity property in the dual-density space and the positivity property of the linear operators
\cite{Vaidya_TAC, Vaidya_CLM_journal,raghunathan2013optimal}. While the theoretical formulation of OCP is based on the P-F operator, the numerical framework exploits the advantage of the Koopman operator's data-driven computation.

The main contributions of the paper are as follows. We present a systematic framework based on the linear operator theory for the data-driven optimal control of a class of continuous-time nonlinear systems. 
The lifting involving the P-F operator is instrumental in the convex formulation of the OCP in the dual space of density. 
The stability property is intimately connected to the optimal control, and we show that the optimal control ensures, a.e. uniform stability of closed-loop dynamics. The proposed data-driven computational framework exploits the recent advances in approximating the Koopman operator and the duality between the Koopman and P-F operator.
In particular, the computational framework makes use of the Naturally Structured Dynamic Mode Decomposition (NSDMD)\cite{huang2018data} algorithm for the approximation, preserving positivity and Markov properties of the linear operators. Time-series data from single or multiple trajectories corresponding to a system with zero input and unit step input are used in the training process.

The paper is organized as follows. In Section \ref{section_prelim}, we present some preliminaries on the linear operator theory and NSDMD algorithm for the finite-dimensional approximation of the Koopman and P-F operators. The main results on the formulation of the convex optimization problem for optimal control are presented in Section \ref{section_main}. The computational framework for the finite-dimensional approximation of the OCP is presented in Section \ref{section_data-driven}. Simulation results are presented in Section \ref{secction_simulation} followed by  conclusions in Section \ref{section_conclusion}.



    
    
    


\section{Preliminaries and Notations}\label{section_prelim}

Following notations will be used throughout this paper. $\mR^n$ denotes the $n$ dimensional Euclidean space  and $\mR^n_{\geq 0}$ is the positive orthant. Let, $\bX\subseteq \mR^n$ and $\bY\subseteq \mR^m$ ${\cal L}_1(\bX,\bY), {\cal L}_\infty(\bX,\bY)$, and ${\cal C}^k(\bX,\bY)$ denote the space of all real valued integrable functions, essentially bounded functions, and space of $k$ times continuously  differentiable functions mapping from $\bX$ to $\bY$ respectively. If the space $\bY$ is not specified then it is understood that the underlying space if $\mR$. ${\cal B}(\bX)$ denotes the Borel $\sigma$-algebra on $\bX$ and ${\cal M}(\bX)$ is the vector space of real-valued measure on ${\cal B}(\bX)$. $\bs_t(\bx)$ denotes the solution of dynamical system $\dot \bx=\bF(\bx)$ starting from initial condition $\bx$.

\subsection{Perron-Frobenius and Koopman Operator}
Consider a dynamical system of the form
\begin{eqnarray}
\dot \bx={\bF}(\bx),\;\;\;\bx\in \bX\subseteq \mathbb{R}^n\label{sys}.
\end{eqnarray}
where the vector field is assumed to be $\bF(\bx)\in {\cal C}^1(\bX,\mR^n)$. 
There are two different ways of lifting the finite dimensional nonlinear dynamics from state space to infinite dimension  space of functions namely using Koopman and Perron-Frobenius operators. The definitions of these operators along with the infinitesimal generators of these operators are defined as follows \cite{Lasota}.
\begin{definition}[Koopman Operator]  $\mathbb{U}_t :{\cal L}_\infty(\bX)\to {\cal L}_\infty(\bX)$ for dynamical system~\eqref{sys} is defined as 
\begin{eqnarray}[\mathbb{U}_t \varphi](\bx)=\varphi(\bs_t(\bx)). \label{koopman_operator}
\end{eqnarray}
The infinitesimal generator for the Koopman operator is given by
\begin{eqnarray}
\lim_{t\to 0}\frac{(\mathbb{U}_t-I)\varphi}{t}=\bF(\bx)\cdot \nabla \varphi(\bx)=:{\cal K}_{\bF} \varphi,\;\;t\geq 0. \label{K_generator}
\end{eqnarray}
\end{definition}

\begin{definition} [Perron-Frobenius Operator] $\mathbb{P}_t:{\cal L}_1(\bX)\to {\cal L}_1(\bX)$ for dynamical system~\eqref{sys} is defined as 
\begin{eqnarray}[\mathbb{P}_t \psi](\bx)=\psi(\bs_{-t}(\bx))\left|\frac{\partial \bs_{-t}(\bx) }{\partial \bx}\right|, \label{PF_operator}
\end{eqnarray}
where $\left|\cdot \right|$ stands for the determinant. The infinitesimal generator for the P-F operator is given by 
\begin{eqnarray}
\lim_{t\to 0}\frac{(\mathbb{P}_t-I)\psi}{t}=-\nabla \cdot (\bF(\bx) \psi(\bx))=: {\cal P}_{\bF}\psi,\;\;t\geq 0. \label{PF_generator}
\end{eqnarray}
\end{definition}
These two operators are dual to each other where the duality is expressed as follows.
\begin{eqnarray*}
\int_{\bX}[\mathbb{K}_t \varphi](\bx)\psi(\bx)d\bx=
\int_{\bX}[\mathbb{P}_t \psi](\bx)\varphi(\bx)d\bx.
\end{eqnarray*}

\begin{property}\label{property}
These two operators enjoy positivity and Markov properties which are used in the finite dimension approximation of these operators. 
\begin{enumerate}
    \item Positivity: The P-F and Koopman operators are positive operators i.e., for any $0\leq\varphi(\bx)\in {\cal L}_\infty(\bX)$ and $0\leq \psi(\bx)\in {\cal L}_1(\bX)$, we have
    
    \begin{equation}\label{positive_prop}
        [\mathbb{P}_t\psi](\bx)\geq 0,\;\;\;\;[\mathbb{U}_t\varphi](\bx)\geq 0,\;\;\;\forall t\geq 0.
    \end{equation}
    \item Markov Property:  The P-F operator satisfies Markov property i.e.,
\begin{equation}\label{Markov_prop}
    \int_\bX [\mathbb{P}_t\psi](\bx)d\bx=\int_\bX\psi(\bx)d\bx.
\end{equation}
\end{enumerate}
\end{property}
\begin{definition}[Equivalent Measures]
Two measures $\mu_1\in {\cal M}(\bX)$ and $\mu_2\in{\cal M}(\bX)$ are said to be equivalent i.e., $\mu_1 \approx \mu_2$ provided $\mu_1(B)=0$ if and only if $\mu_2(B)=0$ for every set $B\in {\cal B}(\bX)$. 
\end{definition}
\subsection{Almost everywhere uniform stability and Stabilization}
The formulation for the OCP we present is intimately connected to density function and Lyapunov measure introduced for verifying following stronger notion of almost everywhere (a.e.) uniform stability as introduced in \cite{Vaidya_converse,rajaram2010stability}. We first make following assumption on the system (\ref{sys}).
\begin{assumption}\label{assume_localstability} We assume that $\bx=0$ is locally stable equilibrium point for the system (\ref{sys}) with local domain of attraction denoted by ${\cal N}$. Let $B_\delta$ be the neighborhood of the origin for any given fixed $\delta>0$ such that $0\in B_\delta\subset \cal N$. We let $\bX_1:=\bX\setminus B_\delta$
\end{assumption}

\begin{definition}[Almost everywhere (a.e.) uniform stability] \label{def_aeuniformstable} The equilibrium point is said to be a.e. uniform  stable w.r.t. measure $\mu\in {\cal M}(\bX)$ if for any given $\epsilon$, there exists a time $T(\epsilon)$ such that
\begin{eqnarray}
\int_{T(\epsilon)}^\infty \mu (B_t)dt<\epsilon,\label{eq_aeunifrom}
\end{eqnarray}
where $B_t:=\{\bx \in \bX_1: \bs_t(\bx)\in B\}$ for every set $B\in{\cal B}(\bX_1)$.
\end{definition}
The above stability definition implies that the  system trajectories starting from a.e. initial conditions w.r.t. measure $\mu$ of staying in any set $B\in{\cal B}(\bX_1)$ can be made arbitrary small after sufficiently large time. The main results of this paper prove that the optimal control ensures a.e uniform stability of the equilibrium point. It is proved that a.e. uniform stability is stronger than a.e. stability \cite[Lemma 7 ]{rajaram2010stability}.  The a.e. stability as introduced in \cite{Rantzer01} is defined as follows.

\begin{definition}\label{definition_aestability}
The equilibrium point at $\bx=0$ is said to be a.e. stable w.r.t. measure, $\mu$, if
\[\mu\{\bx\in \bX: \lim_{t\to \infty}\bs_t(\bx)\neq 0\}=0.\]
\end{definition}

In the rest of the paper we are going to make following assumption on measure $\mu$.
\begin{assumption}\label{assumption_mu}
We assume that the measure $\mu\in {\cal M}(\bX)$ is equivalent to Lebesgue with Radon–Nikod\'{y}m derivative $h$ i.e.,  $\frac{d\mu}{d\bx}=h(\bx)>0$ and $h\in {\cal L}_1(\bX,\mR_{> 0})\cap {\cal C}^1(\bX)$.
\end{assumption}
The following theorem is from \cite[Theorem 13]{rajaram2010stability} providing necessary and sufficient condition for a.e. uniform stability. 

\begin{theorem}\label{theorem_necc_suff}
The equilibrium point $\bx=0$ for system (\ref{sys}) satisfying Assumption \ref{assume_localstability} is a.e. uniformly stable w.r.t. measure $\mu$ if and only if there exists a  function $\rho(\bx)\in{\cal C}^1(\bX\setminus \{0\},\mR_{\geq 0})\cap {\cal L}_1(\bX_1)$  and satisfies
\begin{eqnarray}
\nabla\cdot ({\bf F}\rho )=h.\label{steady_pde1}
\end{eqnarray}
\end{theorem}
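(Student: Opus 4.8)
The plan is to organize the proof around an explicit candidate density and a single change-of-variables identity that converts the stability condition \eqref{eq_aeunifrom} into a statement about the Perron–Frobenius semigroup. I would take as the candidate
\begin{eqnarray}
\rho(\bx):=\int_0^\infty [\mathbb{P}_t h](\bx)\,dt, \label{eq_rho_candidate}
\end{eqnarray}
and first establish the bridge identity. For $B\in{\cal B}(\bX_1)$ with $B_t=\{\bx\in\bX_1:\bs_t(\bx)\in B\}$, substituting $\by=\bs_t(\bx)$ in $\mu(B_t)=\int_{B_t}h\,d\bx$ and invoking the definition \eqref{PF_operator} gives $\mu(B_t)=\int_B[\mathbb{P}_t h](\by)\,d\by$. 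Since $h>0$ and $\mathbb{P}_t$ is positive by Property~\ref{property}, the integrand is nonnegative, so Tonelli's theorem lets me integrate in $t$ and interchange the order of integration to obtain
\begin{eqnarray}
\int_0^\infty \mu(B_t)\,dt=\int_B\rho(\by)\,d\by. \label{eq_key}
\end{eqnarray}
This identity is the hinge of both implications.

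For the \emph{sufficiency} (``if'') direction, given a solution $\rho\in{\cal C}^1(\bX\setminus\{0\},\mR_{\geq 0})\cap{\cal L}_1(\bX_1)$ of \eqref{steady_pde1}, I would rewrite the PDE via \eqref{PF_generator} as ${\cal P}_{\bF}\rho=-h$ and use that ${\cal P}_{\bF}$ generates $\mathbb{P}_t$: then $\frac{d}{dt}[\mathbb{P}_t\rho]=\mathbb{P}_t{\cal P}_{\bF}\rho=-\mathbb{P}_t h$, so integrating in time gives $\int_0^T \mathbb{P}_t h\,dt=\rho-\mathbb{P}_T\rho\leq\rho$, where the inequality uses $\mathbb{P}_T\rho\geq 0$ (positivity together with $\rho\geq 0$). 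Letting $T\to\infty$ and inserting into \eqref{eq_key} with $B=\bX_1$ yields $\int_0^\infty \mu((\bX_1)_t)\,dt\leq\int_{\bX_1}\rho\,d\by<\infty$. A finite integral has vanishing tails, so there is $T(\epsilon)$ with $\int_{T(\epsilon)}^\infty\mu((\bX_1)_t)\,dt<\epsilon$; since $B_t\subseteq(\bX_1)_t$ for every $B\subseteq\bX_1$, the same $T(\epsilon)$ works uniformly over $B$, which is exactly \eqref{eq_aeunifrom}.

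For the \emph{necessity} (``only if'') direction, I would start from a.e. uniform stability, take $B=\bX_1$ in \eqref{eq_key}, and split $\int_0^\infty\mu((\bX_1)_t)\,dt$ into a head bounded by $T(\epsilon)\,\mu(\bX_1)\leq T(\epsilon)\|h\|_1$ and a tail below $\epsilon$, concluding $\rho\in{\cal L}_1(\bX_1)$; nonnegativity of $\rho$ is immediate from positivity of $\mathbb{P}_t$ and $h>0$. The ${\cal C}^1$ regularity on $\bX\setminus\{0\}$ I would obtain from $\bF\in{\cal C}^1$, the smooth dependence of the flow $\bs_t$ on initial data, and differentiation of \eqref{eq_rho_candidate} under the integral. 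The PDE then follows by running the generator computation in reverse: ${\cal P}_{\bF}\rho=\int_0^\infty {\cal P}_{\bF}\mathbb{P}_t h\,dt=\int_0^\infty\frac{d}{dt}[\mathbb{P}_t h]\,dt=\lim_{t\to\infty}\mathbb{P}_t h-h$, where a.e. uniform stability forces $\mathbb{P}_t h\to 0$ on $\bX_1$, leaving ${\cal P}_{\bF}\rho=-h$, i.e. \eqref{steady_pde1}.

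The main obstacle I anticipate is the behavior near the origin and the convergence/regularity bookkeeping it forces. Because trajectories accumulate at $\bx=0$, the integrand $\mathbb{P}_t h$ need not be integrable in $t$ on $B_\delta$, which is precisely why the construction is confined to $\bX_1=\bX\setminus B_\delta$ and why $\rho$ is only claimed ${\cal C}^1$ on $\bX\setminus\{0\}$ with integrability on $\bX_1$: a genuine singularity at the origin is permitted and must be carefully quarantined using Assumption~\ref{assume_localstability}. Making rigorous (a) the differentiation under $\int_0^\infty$ needed for the ${\cal C}^1$ claim, (b) the commutation ${\cal P}_{\bF}\mathbb{P}_t=\mathbb{P}_t{\cal P}_{\bF}$ on the relevant function space, and (c) the exact mode of convergence $\mathbb{P}_t h\to 0$ used in the necessity step is where essentially all the technical care resides; the algebraic skeleton above is otherwise routine.
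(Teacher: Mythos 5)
The paper does not actually prove this theorem; it imports it verbatim from Theorem 13 of the cited reference \cite{rajaram2010stability}, so there is no in-paper proof to compare against. That said, your route is the standard one for this result and coincides with the machinery the paper itself deploys in the proof of Theorem \ref{theorem_ocp}: the candidate $\rho(\bx)=\int_0^\infty[\mathbb{P}_t h](\bx)\,dt$, nonnegativity and integrability from positivity of $\mathbb{P}_t$ plus the tail condition, the pointwise limit $[\mathbb{P}_t h](\bx)\to 0$ (which, as in the paper's Theorem \ref{theorem_ocp} proof, should be argued via integrability in $t$ together with uniform continuity and Barbalat's lemma rather than asserted directly from stability), and the generator identity $\int_0^\infty \tfrac{d}{dt}[\mathbb{P}_t h]\,dt=-h$ yielding \eqref{steady_pde1}. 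The one step that needs patching is your bridge identity $\int_0^\infty\mu(B_t)\,dt=\int_B\rho(\mathbf{y})\,d\mathbf{y}$: since $B_t$ only counts initial conditions lying in $\bX_1$, while $[\mathbb{P}_t h](\mathbf{y})$ pulls $h$ back along the flow through all of $\bX$, you only get $\mu(B_t)\le\int_B[\mathbb{P}_t h]\,d\mathbf{y}$ in general; equality requires that no trajectory starting in $B_\delta$ re-enter $\bX_1$, i.e., that $B_\delta$ be chosen positively invariant (or, equivalently, that $\mathbb{P}_t$ be replaced by the sub-Markov operator restricted to $\bX_1$, as in the cited source). The inequality alone suffices for your sufficiency direction, but the necessity direction --- deducing $\rho\in{\cal L}_1(\bX_1)$ from a.e.\ uniform stability --- genuinely needs the reverse bound, so this invariance must be stated and used explicitly.
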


\subsection{Data-Driven Approximation: Naturally Structured Dynamic Mode Decomposition}\label{section_nsdmd}
Naturally structured dynamic mode decomposition (NSDMD) is a modification of Extended Dynamic Mode Decomposition (EDMD) algorithm \cite{williams2015data}, one of the popular algorithms for Koopman approximation from data. The modifications are introduced to incorporate the natural properties of these operators namely positivity and Markov. For the continuous-time dynamical system (\ref{sys}), consider snapshots of data set obtained as time-series data from single or multiple trajectories
\begin{eqnarray}
{\mathcal X}= [\bx_1,\bx_2,\ldots,\bx_M],\;\;\;\;{\cal Y} = [\mathbf{y}_1,\mathbf{y}_2,\ldots,\mathbf{y}_M] ,\label{data}
\end{eqnarray}
where $\bx_i\in \bX$ and $\mathbf{y}_i\in \bX$. The pair of data sets are assumed to be two consecutive snapshots i.e., $\mathbf{y}_i=\bs_{\Delta t}(\bx_i)$, where $\bs_{\Delta t}$ is solution of (\ref{sys}). Let ${\bPsi}=[\psi_1,\ldots,\psi_N]^\top$ be the choice of basis functions.
The popular Extended Dynamic Mode Decomposition (EDMD) algorithm provides the finite-dimensional approximation of the Koopman operator as the solution of the following least square problem. 

\begin{equation}\label{edmd_op}
\min\limits_{\bf K}\parallel {\bf G}{\bf K}-{\bf A}\parallel_F,
\end{equation}
where,
{\small
\begin{eqnarray}\label{edmd1}
{\bf G}=\frac{1}{M}\sum_{m=1}^M \bPsi({\bx}_m) \bPsi({\bx}_m)^\top,
{\bf A}=\frac{1}{M}\sum_{m=1}^M \bPsi({\bx}_m) \bPsi({\mathbf y}_m)^\top,
\end{eqnarray}}
with ${\bf K},{\bf G},{\bf A}\in\mathbb{R}^{N\times N}$, $\|\cdot\|_F$ stands for Frobenius norm. The above least square problem admits an analytical solution 

\begin{eqnarray}
{\bf K}_{EDMD}=\bf{G}^\dagger \bf{A}\label{edmd_formula}.
\end{eqnarray}
Convergence results for EDMD algorithms in the limit as the number of data points and basis functions go to infinity are provided in \cite{korda2018convergence,klus2020eigendecompositions}.
In this paper, we work with Gaussian Radial Basis Function (RBF) for the finite-dimensional approximation of the linear operators.
Under the assumption that the basis functions are positive, like the Gaussian RBF, the NSDMD algorithm propose following convex optimization problem for the approximation of the Koopman operator that preserves positivity and Markov property in Property~\ref{property}. 
\begin{eqnarray}\label{nsdmd}
&\min\limits_{\hat{\bf P}}\parallel \hat{\bf G}{\hat{\bf P}}-\hat{\bf A}\parallel_F\\\nonumber
\text{s.t.} \;\;
& [\hat{\bf P}]_{ij}\geq 0,\;\;\;\hat {\bf P}\mathds{1} = \mathds{1},
\end{eqnarray}
where,
\begin{eqnarray}\hat{\bf G}={\bf G}{\bf \Lambda}^{-1},\;\;\hat{\bf A}={\bf A}{\bf \Lambda}^{-1},\;\;\&\;\;{\bf \Lambda}=\int_\bX {\bPsi}{\bPsi}^\top d\bx\label{hatGA},
\end{eqnarray}
with $\bf G$ and $\bf A$ are as defined in \ref{edmd1} and $\mathds{1}$ is a vector of all ones. All the matrices in Eq. (\ref{hatGA}) are pre-computed from the data. In fact, since the basis functions are assumed to be Gaussian RBF, the constant ${\bf \Lambda}$ matrix can be computed explicitly as 
\[\Lambda_{i,j} =(\frac{\pi\sigma^2}{2})^{n/2} \exp^\frac{-\lVert \bc_i-\bc_j\rVert^2}{2\sigma^2}, i,j=1,2,\ldots,N,\]
where $\bc_i,\bc_j$ are the centers of the $\psi_i$ and $\psi_j$ Gaussian RBFs respectively. 
The constraints in (\ref{nsdmd}) ensure that finite-dimensional approximation preserves the positivity property and Markov property respectively. The approximation for the P-F operator and its generator are obtained as the solution of the optimization problem (\ref{nsdmd}) as 
\begin{eqnarray}
\mathbb{P}_{\Delta t}\approx \hat{\bf P}^\top=: {\bf P},\;\;\;\;\;{\cal P}_{\bF}\approx \frac{\hat{\bf P}^\top-{\bf I}}{\Delta t}=:\bM.\label{PF_approximation}
\end{eqnarray}

\section{Main Results}\label{section_main}
We consider optimal control problem for  control affine system of the form
\begin{eqnarray}
\dot \bx=\bar {\bf f}(\bx)+{\bf g}(\bx)\bar u,\;\;\bx\in \bX\subseteq \mR^n,\label{cont_syst1}
\end{eqnarray}
where, the vector field $\bar {\bf f}, \bg \in {\cal C}^1(\bX,\mR^n)$ and $\bar u\in \mathbb{R}$ is the control input. For the simplicity of presentation we present results for the case of single input, the results generalize to the multi-input case in straight-forward manner. 
We make following assumption on the control system (\ref{cont_syst1}). 
\begin{assumption}\label{assume_stabilizability}
We assume that the linearization of the nonlinear control system at the origin i.e., the pair $(\frac{\partial \bar {\bf f}}{\partial \bx}(0), \bg(0))$, is stabilizable. 
\end{assumption}
Using the stabilizability Assumption \ref{assume_stabilizability} of the linearized dynamics, we design a local stabilizing controller. The procedure involves identifying the linearized dynamics using time-series data around the origin. We outline the details of this procedure in the computational Section \ref{compute_local}. 
Let $u_\ell$ be the local stabilizing controller.  Defining ${\bf f}:=\bar {\bf f}+\bg u_\ell$ and $u:=\bar u-u_\ell$, we can write system equation (\ref{cont_syst1}) as 
\begin{eqnarray}
\dot \bx= {\bf f}(\bx)+{\bf g}(\bx) u.\label{cont_syst}
\end{eqnarray}
Following is true for the control system (\ref{cont_syst}). 
The origin of system (\ref{cont_syst}) is locally stable with $u=0$. We denote the local domain of attraction around the origin by $\cal N$. Let $B_\delta\subset {\cal N}$ is a $\delta$ neighborhood of equilibrium point for any fixed $\delta >0$. We denote $\bX_1:=\bX\setminus B_\delta$. The objective is to design optimal controller which is active outside $B_\delta$. In Section \ref{section_localoptimal}, we outline procedure for combining the local and global optimal controller using a procedure of smooth blending controller discussed in \cite{rantzer2001smooth}. 
In the following, we assume that the measure $\mu_0\in {\cal M}(\bX)$ and is equivalent to Lebesgue with Radon–Nikod\'{y}m derivative $h$ i.e.,  $\frac{d\mu_0}{d\bx}=h_0(\bx)>0$ and $h_0\in {\cal L}_1(\bX,\mR_{> 0})\cap {\cal C}^1(\bX)$.

\subsection{Convex Formulation of Optimal Control}
Consider the optimal control problem with cost function of the form
\begin{eqnarray}
&J^\star(\mu_0)=\inf_{u}\int_{\bX_1}\int_0^\infty \left(q(\bx(t))+ r u^2(t)\right) \;dt d\mu_0(\bx)\nonumber\\
&{\rm subject\;to}\;\; (\ref{cont_syst}).\label{cost_function}
\end{eqnarray}

\noindent Some comments on the nature of the assumed cost function are necessary. Let 
\begin{eqnarray}V(\bx)=\int_0^\infty \left(q(\bx(t))+ru(t)^2 \right)dt,\label{cost_regular}
\end{eqnarray}
which is the cost function for the usual optimization problem formulated in the state space. 
Using (\ref{cost_regular}) the cost function (\ref{cost_function}) can be written as \[J(\mu_0)=\int_{\bX_1}V(\bx)d\mu_0(\bx)=\int_{\bX_1}V(\bx)h_0(\bx)d\bx,\]
where we used the fact that $d\mu_0(\bx)=h_0(\bx)d\bx$. Note that the new cost function in the density space is weighted with respect to the given density function $h_0$ associating different weightage to different initial conditions or sets.

Another distinguishing feature of the cost function is that the cost function is evaluated and minimized only on set $\bX_1$. Using Assumption \ref{assume_stabilizability}, we can design a local stabilizing controller and restrict the construction of optimal control outside the set $B_\delta$ around the origin. In Section \ref{section_localoptimal}, we outline a procedure for blending the local stabilizing controller with the global optimal controller smoothly. 
We make following assumptions on state cost function $q(\bx)$ and optimal control.
\begin{assumption}\label{assume_costfunction}
 We assume that the state cost function $q: \mR^n\to \mR_{\geq 0}$ is zero at the origin and uniformly bounded away from zero outside the neighborhood $B_\delta$ and $r>0$. 
\end{assumption}


\begin{assumption}\label{assume_OCP}
We assume that for the OCP (\ref{cost_function}) there exists a feedback control input, $u=k(\bx)$, such that the cost function corresponding to this input is finite. Furthermore, the optimal control is feedback in nature i.e., $u^\star=k^\star(\bx)$ with the function $k$ is assumed to be ${\cal C}^1(\bX)$. 
\end{assumption}

With the assumed feedback form of the control input, the OCP can be written as 

\begin{eqnarray}
J^\star(\mu_0)= \inf\limits_{k\in{\cal C}^1(\bX)} &\int_{\bX_1}\left[\int_0^\infty q(\bx(t))+ r k^2(\bx(t))\;dt\right] d\mu_0(\bx)\nonumber\\
 {\rm s.t.}&\dot \bx={\bf f}(\bx)+\bg(\bx)k(\bx).\label{ocp_main}
\end{eqnarray}
We now state the main theorem on the convex formulation of the OCP. 
\begin{theorem}\label{theorem_ocp}
Consider the optimal control problem (\ref{ocp_main}), with the  cost function and optimal control satisfying Assumptions  \ref{assume_costfunction}, and \ref{assume_OCP} respectively. The OCP (\ref{ocp_main}) can be written as following infinite dimensional convex  problem 

\begin{eqnarray}
J^\star(\mu_0)&=&\inf_{\rho\in {\cal S},\bar \rho\in {\cal C}^1(\bX_1)} \;\;\; \int_{\bX_1} q(\bx)\rho(\bx)+r \frac{\bar \rho(\bx)^2 }{\rho} d\bx\nonumber\\
{\rm s.t}.&&\nabla\cdot ({\bf f}\rho +\bg\bar \rho)=h_0, \label{eqn_ocp1}
\end{eqnarray}
where ${\cal S}:={\cal L}_1(\bX_1)\cap {\cal C}^1(\bX_1,\mR_{\geq 0})$. The optimal feedback control input is recovered from the solution of the above optimization problem as  
\begin{eqnarray}
k^\star(\bx)=\frac{\bar \rho^\star(\bx)}{\rho^\star(\bx)}\label{optimal_control},
\end{eqnarray}
where $(\rho^\star, \bar \rho^\star)$ are solution of (\ref{eqn_ocp1}). Furthermore, the optimal control $k^\star(\bx)$ is a.e. uniformly stabilizing the origin. 
\end{theorem}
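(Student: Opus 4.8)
The plan is to prove the theorem in two stages: first establish the equivalence between the state-space OCP \eqref{ocp_main} and the density formulation \eqref{eqn_ocp1} together with its convexity, and then deduce a.e. uniform stability of the recovered control directly from Theorem \ref{theorem_necc_suff}. The engine behind the equivalence is a single change of variables that linearizes the bilinear product of control and density, combined with the Koopman--P-F duality to rewrite the accumulated cost as a weighted integral against a stationary density.

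First I would fix a feedback $k\in\mathcal{C}^1(\bX)$ with finite cost (guaranteed by Assumption \ref{assume_OCP}) and consider the closed-loop vector field $\bF_k:={\bf f}+{\bf g}k$. Associated with $\bF_k$ is the stationary density
\[
\rho(\bx)=\int_0^\infty [\mathbb{P}_t h_0](\bx)\,dt,
\]
where $\mathbb{P}_t$ is the P-F semigroup of $\bF_k$. Integrating the generator relation \eqref{PF_generator} along the semigroup shows that $\rho$ solves $\nabla\cdot(\bF_k\rho)=h_0$, and finiteness of the cost forces $\rho\in\mathcal{S}$. The crucial step is to introduce $\bar\rho:=k\rho$; then $\bF_k\rho={\bf f}\rho+{\bf g}\bar\rho$, so the nonlinear stationary equation collapses to the \emph{linear} constraint $\nabla\cdot({\bf f}\rho+{\bf g}\bar\rho)=h_0$ of \eqref{eqn_ocp1}. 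Conversely, from any feasible pair $(\rho,\bar\rho)$ with $\rho>0$ one recovers $k=\bar\rho/\rho$, so the two feasible sets are in bijection.

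Next I would transform the cost. Writing $q(\bx(t))=[\mathbb{U}_t q](\bx)$ and $k^2(\bx(t))=[\mathbb{U}_t k^2](\bx)$ for the closed-loop Koopman semigroup, Fubini's theorem and the Koopman--P-F duality give
\[
\int_{\bX_1}\!\!\int_0^\infty q(\bx(t))\,dt\,h_0\,d\bx=\int_0^\infty\!\!\int_{\bX_1} q\,[\mathbb{P}_t h_0]\,d\bx\,dt=\int_{\bX_1} q\,\rho\,d\bx,
\]
and likewise $\int_{\bX_1}\!\int_0^\infty r k^2(\bx(t))\,dt\,h_0\,d\bx=\int_{\bX_1} r k^2\rho\,d\bx$. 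Substituting $k=\bar\rho/\rho$ yields the control term $r\bar\rho^2/\rho$, reproducing the objective of \eqref{eqn_ocp1}. Convexity is then immediate: the constraint is affine in $(\rho,\bar\rho)$, the term $q\rho$ is linear, and $r\bar\rho^2/\rho$ is the perspective of $t\mapsto rt^2$, hence jointly convex on $\{\rho>0\}$. Finally, since any minimizer $(\rho^\star,\bar\rho^\star)$ has $\rho^\star\in\mathcal{S}$ satisfying $\nabla\cdot(\bF_{k^\star}\rho^\star)=h_0$ with $k^\star=\bar\rho^\star/\rho^\star$, Theorem \ref{theorem_necc_suff} applied to $\bF_{k^\star}$ certifies that $k^\star$ renders the origin a.e. uniformly stable.

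The hard part will be making the cost transformation fully rigorous. Three points require care: (i) the convergence and $\mathcal{L}_1$-integrability of $\rho=\int_0^\infty\mathbb{P}_t h_0\,dt$, which must be tied to finiteness of the cost and to the a.e. uniform stability furnished by Theorem \ref{theorem_necc_suff}; (ii) the Fubini interchange and the duality identity on the \emph{restricted} domain $\bX_1$, since closed-loop trajectories may exit and re-enter $\bX_1$ through $B_\delta$, so the boundary contributions at $\partial B_\delta$ must be controlled (this is exactly where Assumption \ref{assume_costfunction} and the local stabilizing design of Section \ref{section_localoptimal} enter); and (iii) guaranteeing $\rho^\star>0$ wherever $\bar\rho^\star\neq 0$ so that the feedback $k^\star=\bar\rho^\star/\rho^\star$ is well defined and $\mathcal{C}^1$, which should follow from strict positivity of $h_0$ propagated by the positivity of $\mathbb{P}_t$ in Property \ref{property}.
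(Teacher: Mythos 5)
Your proposal is correct and follows essentially the same route as the paper's proof: define $\rho=\int_0^\infty[\mathbb{P}_t^c h_0]\,dt$ for the closed loop, use Koopman--P-F duality to pull the running cost back onto $\rho$, linearize via $\bar\rho=k\rho$, verify the divergence constraint through the generator identity, and invoke Theorem~\ref{theorem_necc_suff} for a.e.\ uniform stability. Your additions --- the explicit perspective-function argument for convexity, the bijection between feasible sets needed for the converse inclusion, and the flagged technical issues (decay of $\mathbb{P}_t^c h_0$, which the paper handles via Barbalat's lemma, and the boundary behavior on $\bX_1$) --- are refinements of, not departures from, the paper's argument.
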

\begin{proof}
Consider the feedback control system
\begin{eqnarray}\dot \bx={\bf f}(\bx)+\bg(\bx)k(\bx),\label{feedback_system}
\end{eqnarray}
where $u=k(\bx)$ be the feedback controller satisfying Assumption \ref{assume_OCP}. Let $\mP^c_t$ and $\mU^c_t$ be the P-F and Koopman operator for the feedback control system (\ref{feedback_system}). Using the definition of the Koopman operator, the cost in (\ref{ocp_main}) can be written as 
\[J(\mu_0)=\int_{\bX_1}\int_0^\infty [\mU_t^c (q+rk^2 )](\bx)dt h_0(\bx)d\bx.\]
Using the duality and linearity of the Koopman and P-F operators, we obtain
\begin{eqnarray}
J(\mu_0)=\int_{\bX_1}\int_0^\infty (q+rk^2)(\bx)[\mP_t^c h_0](\bx)dt d\bx.\label{ocp_costproof}
\end{eqnarray}
Using the Assumption \ref{assume_OCP} on the existence of optimal control for which the optimal cost is finite, we obtain
\begin{eqnarray}
\kappa \int_{\bX_1}\int_0^\infty [\mP_t h](\bx)dt d\bx \leq J(\mu_0)\leq M <\infty, \label{rr}
\end{eqnarray}
for some constant $M>0$, where $\kappa$ is assumed lower bound on the  state cost  $q(\bx)$ following Assumption \ref{assume_costfunction}. We define
\begin{eqnarray}\rho(\bx):=\int_0^\infty [\mP_t^c h_0](\bx) dt. \label{definingrho1}
\end{eqnarray} 
It follows from (\ref{rr}) that $\rho(\bx)$ is well defined for a.e. $\bx$ and is an integrable function. From the definition of the P-F operator and the assumption made on function $h_0$  it follows that the function $[\mP_t^c h_0](\bx)$ is uniformly continuous function of time. Hence using Barbalat Lemma \cite[pg. 269]{barbalat1959systemes} we have 
\begin{eqnarray}
\lim_{t\to \infty}[\mP_t h_0](\bx)=0,\label{convergence1}
\end{eqnarray}
for a.e. $\bx$.




Now using the definition of $\rho$ from (\ref{definingrho1}) we  write (\ref{ocp_costproof}) as
\begin{eqnarray}J(\mu_0)=\int_{\bX_1}(q+rk^2)(\bx)\left[\int_0^\infty [\mP_t h_0](\bx)dt\right] d\bx\nonumber\\
=\int_{\bX_1}(q+rk^2)(\bx)\rho(\bx) d\bx.\label{costcc}
\end{eqnarray}
Defining $\bar\rho(\bx):=\rho(\bx)k(\bx)$ it follows that  (\ref{costcc}) can be written in the form  (\ref{eqn_ocp1}).  
We next show that $\rho(\bx)$ and $\bar \rho$ satisfies the constraints in (\ref{eqn_ocp1}). 
Substituting (\ref{definingrho1}) in the constraint of (\ref{eqn_ocp1}), we obtain
\begin{eqnarray}
&&\nabla\cdot ({\bf f}_c(\bx) \rho(\bx))=\int_0^\infty \nabla\cdot ({\bf f}_c(\bx)[\mathbb{P}_t^c h_0](\bx)) dt\nonumber\\
&=&\int_0^\infty -\frac{d}{dt}[\mathbb{P}^c_t h_0](\bx)dt=-[\mathbb{P}^c_th_0](\bx)\Big|^{\infty}_{t=0}=h_0(\bx),\nonumber\\\label{eq11}
\end{eqnarray}
where, ${\bf f}_c(\bx):={\bf f}(\bx)+\bg(\bx)k(\bx)$. In deriving (\ref{eq11}) we have used the infinitesimal generator property of P-F operator Eq. (\ref{PF_generator}) and the fact that $\lim_{t\to \infty} [\mathbb{P}_t^c h_0](\bx)=0$  following (\ref{convergence1}).
Furthermore, since $h_0> 0$, it follows that $\rho> 0$ from the positivity property of the P-F operator. Combining (\ref{ocp_costproof}) and (\ref{eq11}) along with the definition of $\bar \rho(\bx):=\rho(\bx) k(\bx)$, it follows that the OCP problem can be written as  convex optimization problem (\ref{eqn_ocp1}). The optimal control $k^\star(\bx)$ from (\ref{optimal_control}) is a.e. uniformly stabilizing w.r.t. measure $\mu_0$ follows from the results of Theorem \ref{theorem_necc_suff} and using the fact that closed loop system satisfies (\ref{eq11}). $\bar \rho(\bx)\in {\cal L}_1(\bX_1)$. $\rho(\bx)\in {\cal L}_1(\bX_1)\cap {\cal C}^1(\bX_1,\mR_{\geq 0})$ follows from the fact that $h_0\in {\cal C}^1(\bX)$, the integral formula for $\rho$ in (\ref{definingrho1}), and  the definition of P-F operator (\ref{PF_operator}).
\end{proof}

\begin{remark}
The optimal control problem's goal is to minimize the cost function; however, the optimal control is also stabilizing the closed-loop dynamics in a.e. sense. The optimal density function serves as a stability certificate for the feedback control system. 
\end{remark}

We next consider the optimization problem involving $1$-norm on the control input. 
{\small
\begin{eqnarray}
 \inf\limits_{k\in{\cal C}^1(\bX)} &\int_{\bX_1}\left[\int_0^\infty q(\bx(t))+ r|k(\bx(t))|\;dt\right] d\mu_0(\bx)\nonumber\\
 {\rm s.t.}&\dot \bx={\bf f}(\bx)+\bg(\bx)k(\bx).\label{ocp_mainl1norm}
\end{eqnarray}}
We make following assumption
\begin{assumption}\label{assume_OCPl1}
We assume that for the OCP (\ref{ocp_mainl1norm}) there exists a feedback control input such that the  cost function corresponding to the input is finite. Furthermore, the optimal control is feedback in nature i.e., $u^\star=k^\star(\bx)$ with the function $k$ is assumed to be ${\cal C}^1(\bX)$. 
\end{assumption}
 
\begin{theorem}\label{theorem_ocpl1}
Consider the optimal control problem (\ref{ocp_mainl1norm}), with the cost function, and optimal control satisfying Assumptions \ref{assume_costfunction} and \ref{assume_OCPl1} respectively. The OCP (\ref{ocp_main}) is written as following infinite dimensional optimization problem 
{\small
\begin{eqnarray}
J^\star(\mu_0)&=&\inf_{\rho\in {\cal S},\bar \rho\in {\cal C}^1(\bX_1)} \;\;\; \int_{\bX_1} q(\bx)\rho(\bx)+r |\bar\rho(\bx)| d\bx\nonumber\\
{\rm s.t}.&&\nabla\cdot ({\bf f}\rho +\bg\bar \rho)=h_0, \label{eqn_ocp12}
\end{eqnarray}}
where ${\cal S}:={\cal L}_1(\bX_1)\cap {\cal C}^1(\bX_1,\mR_{\geq 0})$. The optimal feedback control input is recovered from the solution of the above optimization problem as  
$k^\star(\bx)=\frac{\bar \rho^\star(\bx)}{\rho^\star(\bx)}$.
Furthermore, the optimal control $k^\star(\bx)$ is a.e. uniformly stabilizing the origin. 
\end{theorem}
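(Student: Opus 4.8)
The plan is to follow the proof of Theorem \ref{theorem_ocp} almost verbatim, since the two optimal control problems share an identical structure and differ only in the penalty on the control input. First I would take the feedback control system \eqref{feedback_system} with $u=k(\bx)$ satisfying Assumption \ref{assume_OCPl1}, and rewrite the cost using the Koopman operator $\mU_t^c$ of the closed loop as
\[
J(\mu_0)=\int_{\bX_1}\int_0^\infty [\mU_t^c(q+r|k|)](\bx)\,dt\,h_0(\bx)\,d\bx.
\]
Invoking the duality between the Koopman and Perron--Frobenius operators together with the linearity of $\mP_t^c$ gives
\[
J(\mu_0)=\int_{\bX_1}\int_0^\infty (q+r|k|)(\bx)[\mP_t^c h_0](\bx)\,dt\,d\bx,
\]
in exact analogy with \eqref{ocp_costproof}.

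Next I would define $\rho(\bx):=\int_0^\infty [\mP_t^c h_0](\bx)\,dt$ as in \eqref{definingrho1}. Every step establishing the good behaviour of $\rho$ transfers without change: the finiteness of the cost under Assumption \ref{assume_OCPl1} together with the lower bound $\kappa$ on $q(\bx)$ from Assumption \ref{assume_costfunction} yields the same sandwich \eqref{rr}, so $\rho$ is well defined a.e.\ and integrable; the uniform continuity of $t\mapsto [\mP_t^c h_0](\bx)$ and Barbalat's lemma give $\lim_{t\to\infty}[\mP_t^c h_0](\bx)=0$; and the positivity of the P-F operator together with $h_0>0$ gives $\rho(\bx)>0$. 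Substituting the definition of $\rho$, the cost collapses to $\int_{\bX_1}(q+r|k|)(\bx)\rho(\bx)\,d\bx$.

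The one place where the argument genuinely diverges from Theorem \ref{theorem_ocp} is the treatment of the control term after the change of variables $\bar\rho(\bx):=\rho(\bx)k(\bx)$. Because $\rho(\bx)>0$ everywhere on $\bX_1$, we have $|k(\bx)|=|\bar\rho(\bx)|/\rho(\bx)$, so that $r|k|\rho=r|\bar\rho|$; this is precisely the $L_1$ analogue of the identity $rk^2\rho=r\bar\rho^2/\rho$ used in the quadratic case, and it turns the objective into $\int_{\bX_1} q\rho+r|\bar\rho|\,d\bx$, matching \eqref{eqn_ocp12}. The steady-state constraint $\nabla\cdot({\bf f}\rho+\bg\bar\rho)=\nabla\cdot({\bf f}_c\rho)=h_0$ is derived exactly as in \eqref{eq11}, using the infinitesimal generator property \eqref{PF_generator} and the limit just established. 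Finally the control recovery $k^\star=\bar\rho^\star/\rho^\star$ and the a.e.\ uniform stabilization of the origin follow from Theorem \ref{theorem_necc_suff} as before, since the closed loop again satisfies \eqref{eq11}.

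I do not anticipate a substantive obstacle here, as the proof is essentially a transcription of the quadratic case. The only point that deserves care is that the reduction $|k|=|\bar\rho|/\rho$ must hold pointwise, which requires strict positivity of $\rho$ on all of $\bX_1$ rather than merely a.e.; this is guaranteed by $h_0>0$ and the positivity of the Perron--Frobenius semigroup. It is also worth remarking that, unlike the quadratic objective where one must verify joint convexity of the perspective term $\bar\rho^2/\rho$, the $L_1$ objective $q\rho+r|\bar\rho|$ is manifestly convex --- indeed jointly so in $(\rho,\bar\rho)$ --- while the constraint remains affine, so the convexity of \eqref{eqn_ocp12} is immediate.
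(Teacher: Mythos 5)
Your proposal is correct and matches the paper's intent exactly: the paper's own proof of this theorem is a single line stating that it ``follows along the lines of'' the proof of Theorem~\ref{theorem_ocp}, and your write-up is precisely that transcription, with the only genuinely new step being the identity $r|k|\rho = r|\bar\rho|$ (valid since $\rho>0$ by positivity of the P-F operator and $h_0>0$) replacing $rk^2\rho = r\bar\rho^2/\rho$. No gaps.
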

\begin{proof}
The proof of this theorem follows along the lines of proof of Theorem \ref{theorem_ocp}.\hfill{$\blacksquare$} 
\end{proof}



\subsection{Local Optimal Control and Controller Blending}\label{section_localoptimal}
The density function $\rho$ for the solution of optimization problem satisfy 
\[\rho(\bx)=\int_0^\infty [\mathbb{P}^c_t h_0](\bx) dt\]
where $\mathbb{P}_t^c$ is the P-F operator for the closed-loop system $\dot \bx={\bf f}(\bx)+{\bf g}(\bx){ k}(\bx)$ and hence $\rho$ serves as an occupancy measure i.e., $\int_A \rho(\bx) d\bx=\left<\int_0^\infty[\mathbb{U}_t \chi_A] dt,h\right>$ for any set $A\in {\cal B}(\bX_1)$ signifies the amount of time closed-loop system trajectories spend in the set $A$ with initial condition supported on measure $\mu_0$, where $\chi_A$ is the indicator function of set $A$. Because of this, $\rho(\bx)$ has a singularity at the equilibrium point stabilized by the closed-loop system.  We exclude the small neighborhood around the origin for the proper parameterization of the density function $\rho$ in the computation of optimal control due to singularity at the origin. In particular, the optimization problem (\ref{eqn_ocp1}) is solved excluding the small neighborhood around the origin. For the small region around the origin, we either design stabilizing controller or optimal control depending upon the linearized system's stabilizability or controllability property around the origin. For the design of the local controller we use time-series for the identification of linear dynamics. 
Let $k(\bx)$ and $\rho(\bx)$ be the global optimal controller obtained as the solution of optimization problem (\ref{eqn_ocp1}). Let $k_{\ell}$ be the local feedback controller obtained by solving linear quadratic regular (LQR) optimal control or Lyapunov  based stabilization control  using the identified linearized dynamics and $\rho_{\ell}(\bx)=\max\{(\bx^\top P\bx)^{-3}-\gamma,0\}$, where $P$ is a solution of Riccatti equation or controlled Lyapunov equation obtained based on identified linearized dynamics. The local and global controllers are combined using the blending procedure from \cite{rantzer2001smooth} as follows:
\[\bar u=\frac{\rho_{\ell}(\bx)}{\rho_{\ell}(\bx)+\rho(\bx)} k_{\ell}(\bx)+\frac{\rho(\bx)}{\rho_{\ell}(\bx)+\rho(\bx)} k(\bx),\]
 The region where local controller is active is given by $\bx^\top P\bx\leq (\frac{1}{\delta})^{\frac{1}{3}}$. 


\subsection{Data-Driven Nonlinear Stabilization}

The data-driven stabilization of a nonlinear system will be the particular case of our data-driven optimal control system. As the constraint in the optimization problem guarantees stability following the results of Theorem \ref{theorem_necc_suff}. In particular, using the optimal control formulation in (\ref{eqn_ocp1}), the stabilization problem can be posed as a  feasibility problem. Our proposed data-driven approach for stabilization will stand in contrast to the model-based approach for stabilization using density function as presented in \cite{Prajna04}.


\section{Data Driven Approximation}\label{section_data-driven}

For the data-driven computation of  optimal control, we need to provide finite dimensional approximation of the infinite dimensional linear program (\ref{eqn_ocp1}) and (\ref{eqn_ocp12}). Towards this goal we need the data-driven approximation of the generator corresponding to vector field $\bf f$ and $\bg$ i.e., $\nabla\cdot ({\bf f}\rho)$ and $\nabla\cdot(\bg\bar \rho)$. 
\begin{remark}\label{remark_positivebasis}
In this paper, we use Gaussian RBF to obtain all the simulation results i.e.,
$
\psi_k(\bx)=\exp^{-\frac{\parallel \bx-{\bf c}_k\parallel^2}{2\sigma^2}}.$
where $\bc_k$ is the center of the $k^{th}$ Gaussian RBF. 
\end{remark}
\subsection{Approximation of Convex Optimization Problem}
Let ${\bf P}_0\in \mathbb{R}^{N\times N}$  be the finite-dimensional approximation of the P-F operator corresponding to uncontrolled dynamical system $\dot \bx={\bf f}(\bx)$. Similarly, let ${\bf P}_1$ be the P-F operator of the system obtained with unit step input i.e., $\dot \bx={\bf f}(\bx)+\bg(\bx)$. These operators are obtained using NSDMD algorithm from section \ref{section_nsdmd} with time series data generated from the dynamical system with discretization time-step of $\Delta t$.
The approximation of the P-F generator corresponding to the vector field ${\bf f}$ is  
\begin{eqnarray}
{\cal P}_{\bf f}\approx \frac{1}{\Delta t}(\bP_0-{\bf I})=:\bM_0. \label{P0_approx}
\end{eqnarray}
Using linearity property of the generator it follows that 
\begin{eqnarray}
{\cal P}_{\bg}={\cal P}_{{\bf f}+\bg}-{\cal P}_{\bf f}\approx \frac{\bP_1-\bP_0}{\Delta t}=:\bM_1. \label{Pj_approx}
\end{eqnarray}
Let $h(\bx)$, $ \rho(\bx)$, and $\bar \rho(\bx)$ be expressed in terms of the basis function 
\begin{eqnarray}
h(\bx)=\bPsi^\top {\bf m},\;\rho(\bx)\approx \bPsi^\top{\bf v},\;\bar\rho(\bx)\approx \bPsi^\top {\bf w}.\label{approx}
\end{eqnarray}
With the above approximation of the generators ${\cal P}_{\bf f}$ and ${\cal P}_{\bg_i}$ and $\rho, \bar \rho$ we can approximate the equality constraints in the optimization problem (\ref{eqn_ocp1})  as finite dimensional equality constraints. 
\[
-\bPsi(\bx)^\top \left(\bM_0 {\bf v}+ \bM_1{\bf w}\right)=\bPsi(\bx)^\top {\bf m}.\]
We now proceed with the approximation of the cost function. 
\[\int_{\bX_1}q(\bx)\rho(\bx)d\bx\approx \int_{\bX_1} q(\bx)\bPsi^\top d\bx {\bf v}={\bf d}^\top{\bf v},\]
where the vector ${\bf d}:=\int_X q(\bx)\bPsi d\bx$ can be pre-computed.  
We use following approximation for the term 
\[\frac{\bar \rho(\bx)}{\rho(\bx)}\approx \bPsi(\bx)^\top \frac{\bw}{\bv}, \]
where the division is assumed element-wise. The above approximation is justified as we use Gaussian RBFs as basis functions in our simulation and apply following thump rule to select parameters for Gaussian RBFs, $d\leq 3\sigma\leq 1.5 d$. Where $d$ is the distance between the Gaussian RBFs centers and $\sigma$ is the standard deviation. The choice of parameters ensure that in the region of intersection of two Gaussian RBFs, the function takes smaller values. 
With the above approximation, we have
\[\frac{\bar \rho^2}{\rho}\approx \bw^\top \bPsi \bPsi^\top\frac{\bw}{\bv},\;\;\int_{\bX_1}\frac{\bar \rho^2}{\rho}d\bx=\bw^\top {\bf D}\frac{\bw}{\bv},\]
where, ${\bf D}=\int_{\bX_1} \bPsi\bPsi^\top d\bx $. We have the following approximation to the optimization problem (\ref{eqn_ocp1})

\begin{eqnarray*}
&\min_{\bPsi^\top{\bf v}\geq 0, {\bf w}} {\bf d}^\top {\bf v}+r {\bf w}^\top {\bf D} \frac{{\bf w}}{{\bf v}}\\
&{\rm s.t.}\;-\bPsi(\bx)^\top \left(\bM_0 {\bf v}+ \bM_1{\bf w}\right)=\bPsi(\bx)^\top {\bf m}.
\end{eqnarray*}

Since the basis functions are taken to be positive, (Remark \ref{remark_positivebasis}), the approximation for the $\rho$ and $\bar \rho$ in (\ref{approx}) can be obtained by solving following finite-dimensional convex problem.

\begin{eqnarray}
&\min_{{\bf v}\geq 0, {\bf w}} {\bf d}^\top {\bf v}+ r {\bf w}^\top {\bf D} \frac{{\bf w}}{{\bf v}}\nonumber\\
&{\rm s.t.}\; -\left(\bM_0 {\bf v}+ \bM_1{\bf w}\right)= {\bf m}.\label{cvxopt_1}
\end{eqnarray}
 The optimization problem is convex as the cost function is quadratic over linear  and the constraints are linear in the decision variables.
The optimal control is then approximated as $u=\bPsi^\top(\bx) \frac{{\bf w}}{{\bf v}}$, where the division is element-wise. 
Similarly, the finite dimensional approximation of the OCP in Theorem \ref{theorem_ocpl1} corresponding to ${\cal L}_1$ norm on control is given by 
\begin{eqnarray}
&\min_{{\bf v}\geq 0, {\bf w}} {\bf d}^\top {\bf v}+rc |{\bf w}|\nonumber\\
&{\rm s.t.}\; - \left(\bM_0 {\bf v}+ \bM_1{\bf w}\right)={\bf m},\label{cvxopt_2}
\end{eqnarray}
where $c=\int_{\bX_1}\psi_i(\bx)d\bx=\int_{\bX_1}\psi_j(\bx)d\bx$ is a positive constant. 

\subsection{Computation of Local Optimal Controller}\label{compute_local}
For the computation of the local optimal controller, we identify local linearized dynamics from data. To identify the linearized dynamics, we use time-series data generated by initializing the system around the origin. For the approximation of local linear dynamics, we use the EDMD algorithm instead of NSDMD with basis function $\bPsi(\bx)=\bx$, i.e., identity function. 
In particular, let $\bA$ and $\bf b$ are the matrices for the local linear approximation of the system dynamics in discrete-time is obtained by solving following least square problem
\begin{eqnarray}
\min_{\bA,{\bf b}}\parallel {\cal Y}-\bA {\cal X} -{\bf b}{\cal U}\parallel,
\end{eqnarray}
where,{\small
\[{\cal Y}:=[{\bf y}_1,\ldots,{\bf y}_{L},{\bf y}_{L+1},\ldots{\bf y}_M],{\cal X}=[{\bf x}_1,\ldots,{\bf x}_{L},{\bf x}_{L+1},\ldots{\bf x}_M],\]} ${\cal U}=[\underbrace{0,\ldots, 0}_{L},\underbrace{1,\ldots,1}_{M-L}]$ and ${\bf y}_k=\bs_{\Delta t}(\bx_k,\bar u\in \{0,1\})$ is the solution of (\ref{cont_syst1}) with either zero input or step input and initial condition $\{\bx_k\}$ initialized around the origin. So part of the data is generated using zero input and remaining using step input.  
Once we have the local approximation of the system matrices, the local controller, $u_\ell$ is designed using the Lyapunov-based approach for stabilization or using the linear quadratic regulator (LQR) based optimal control. 

\section{Simulation results}\label{secction_simulation}
 All the simulation results in this paper are obtained using Gaussian RBF. The following rules of thumb are abided in selecting centers and $\sigma$ parameters for the Gaussian RBF. The RBF centers are chosen to be uniformly distributed in the state space at a distance of $d$. The $\sigma$ for the Gaussian RBF is chosen such that $d\leq 3\sigma\leq 1.5 d$. All the simulation results are performed using MATLAB on a desktop computer with 16GB RAM total simulation time for each of these examples did not exceed more than five minutes. The optimization problem is solved using CVX. \\
 
\subsection{ Scalar Nonlinear System} 
The first example of scalar system is chosen to compare the optimal control obtained using our proposed data-driven approach with the analytical derived optimal control.
 \[
 \dot x = a x^3+u,\;\;a=0.5.
 \]
 The analytical formula for optimal control obtained by solving HJB equation with cost function  $x^2+u^2$ is $u^\star=-ax^{3}- x\sqrt{a^2 x^4+1}$. In Fig. \ref{fig:lineara} shows the comparison of the closed loop trajectories obtained using feedback controller $u=u^\star$ and controller obtained using data-driven computational framework. 
For the finite dimensional approximation we use $5$ Gaussian RBF with $\sigma = 1.225$, and the centers of basis functions are distributed uniformly within the range of $D= [-5,\; 5]$. The region of operation for the blending control is $|x|\leq 0.15$ and is marked with purple line in Fig. \ref{fig:lineara}. For the approximation of P-F operator, we applied NSDMD algorithm using one-step time-series data with $1\text{e}3$  initial conditions, with $\Delta t= 0.01$. The comparison between the analytical derived optimal control and P-F based optimal control for the state trajectories and optimal cost shows a close match. 
 \begin{figure}[htbp]
\centering
\begin{subfigure}[b]{0.35\textwidth}
\includegraphics[width=\textwidth]{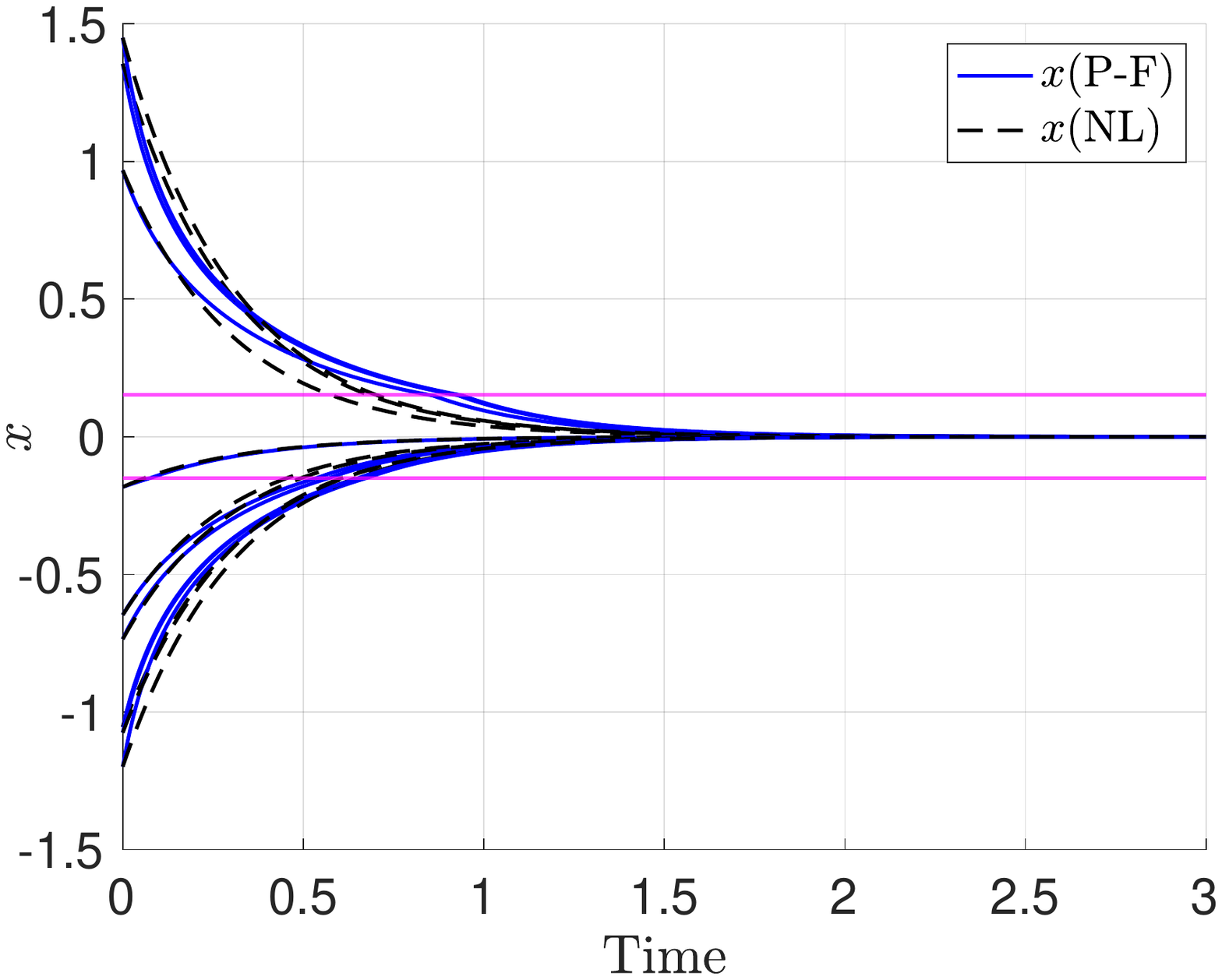}
\caption{$x_{1}$ vs $t$}
\label{fig:lineara}
\end{subfigure}
\hfill
\begin{subfigure}[b]{0.35\textwidth}
\includegraphics[width=\textwidth]{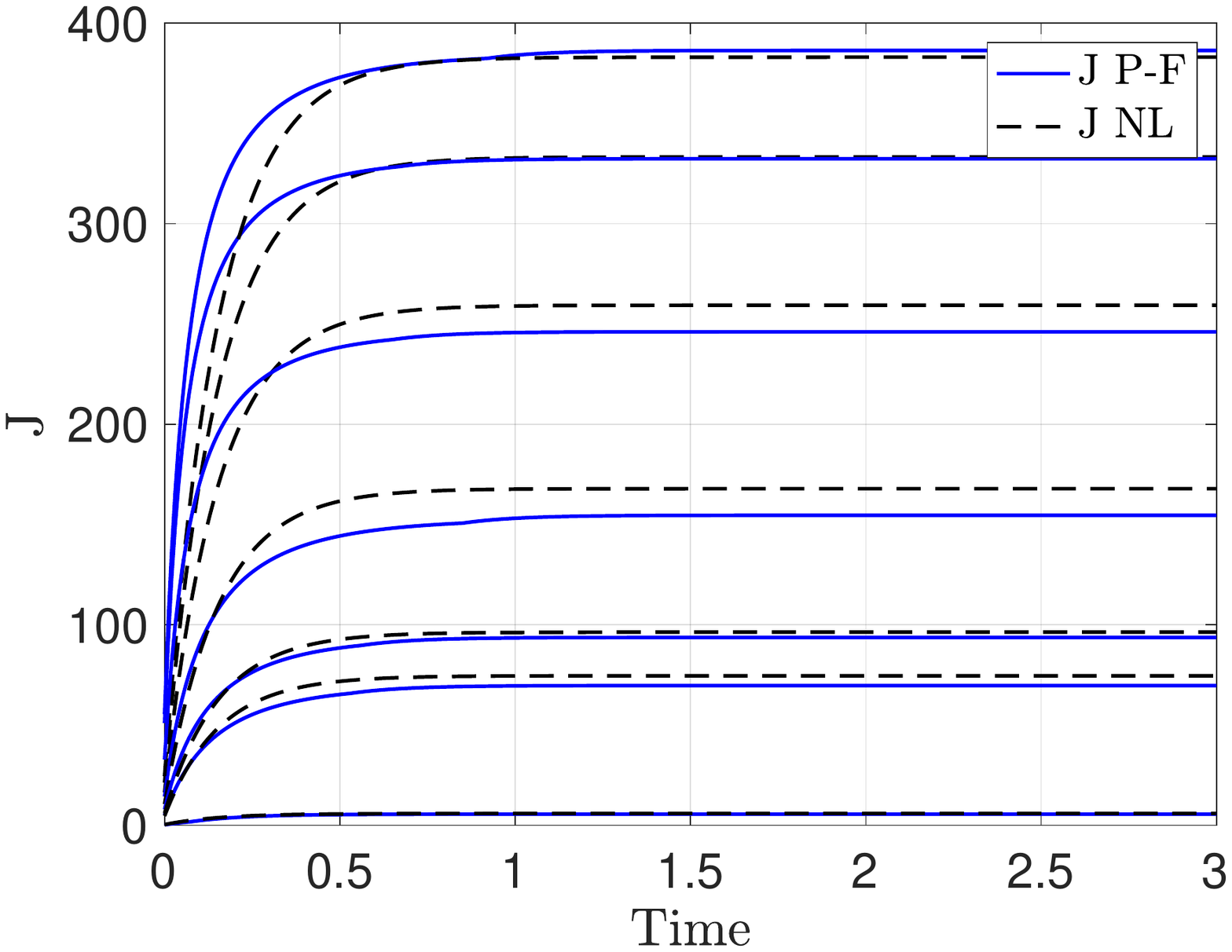}
\caption{Comparison of cost function}
\label{fig:linearb}
\end{subfigure}
\caption{Comparison of state trajectories and cost obtained using analytical formula and P-F based data-driven control.}
\end{figure}



\subsection{  Duffing Oscillator}
\begin{equation}
\begin{aligned}
    \dot{x}_1 &= x_2,\;\;
    \dot{x}_2 &= -x_1 - x_1^3 - 0.5x_2 +u. \\
\end{aligned}
\end{equation}
In this example, we used 225 Gaussian RBF with $\sigma = 0.21$, and the centers of basis functions are distributed uniformly within the range of $D= [-3,\; 3]\times [-3,\;3]$, $r=1000$. The cost function is chosen to be $\bx^\top \bx+u^2$. 
 For the approximation of P-F operator, we applied NSDMD algorithm using one-step time-series data with $5\text{e}4$ initial conditions, $\Delta t= 0.01$. The region where the blending controller is active is marked by doted ellipsoid around the origin in Fig. \ref{fig:vdp2_1}. Simulation results show that the optimal control is successful in stabilizing the origin. \\
 \begin{figure}[htbp]
\centering
\begin{subfigure}[b]{0.35\textwidth}
\includegraphics[width=\textwidth]{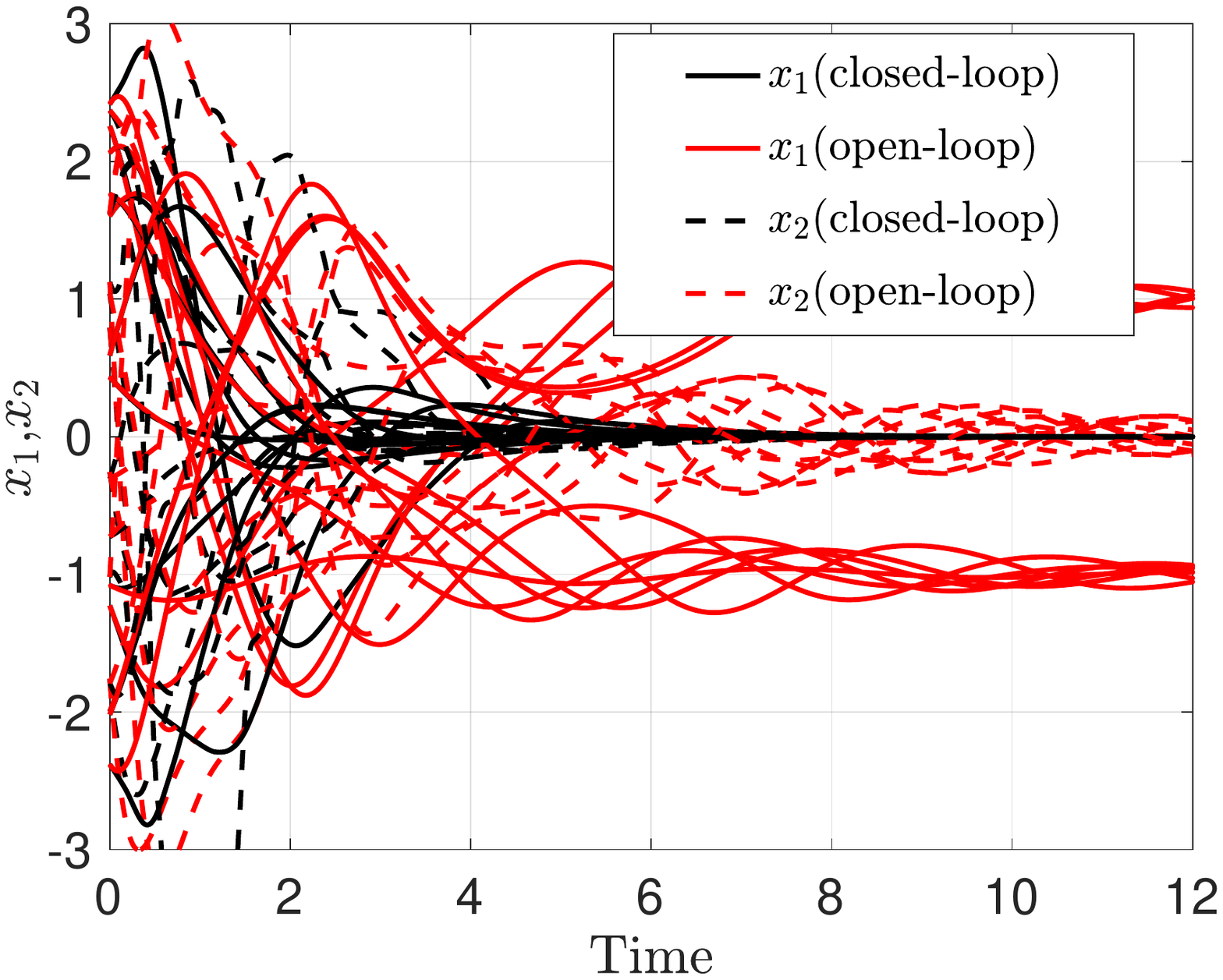}
\caption{$x_{1\sim 2}$ vs $t$}
\label{fig:vdp2_1}
\end{subfigure}
\begin{subfigure}[b]{0.35\textwidth}
\includegraphics[width=\textwidth]{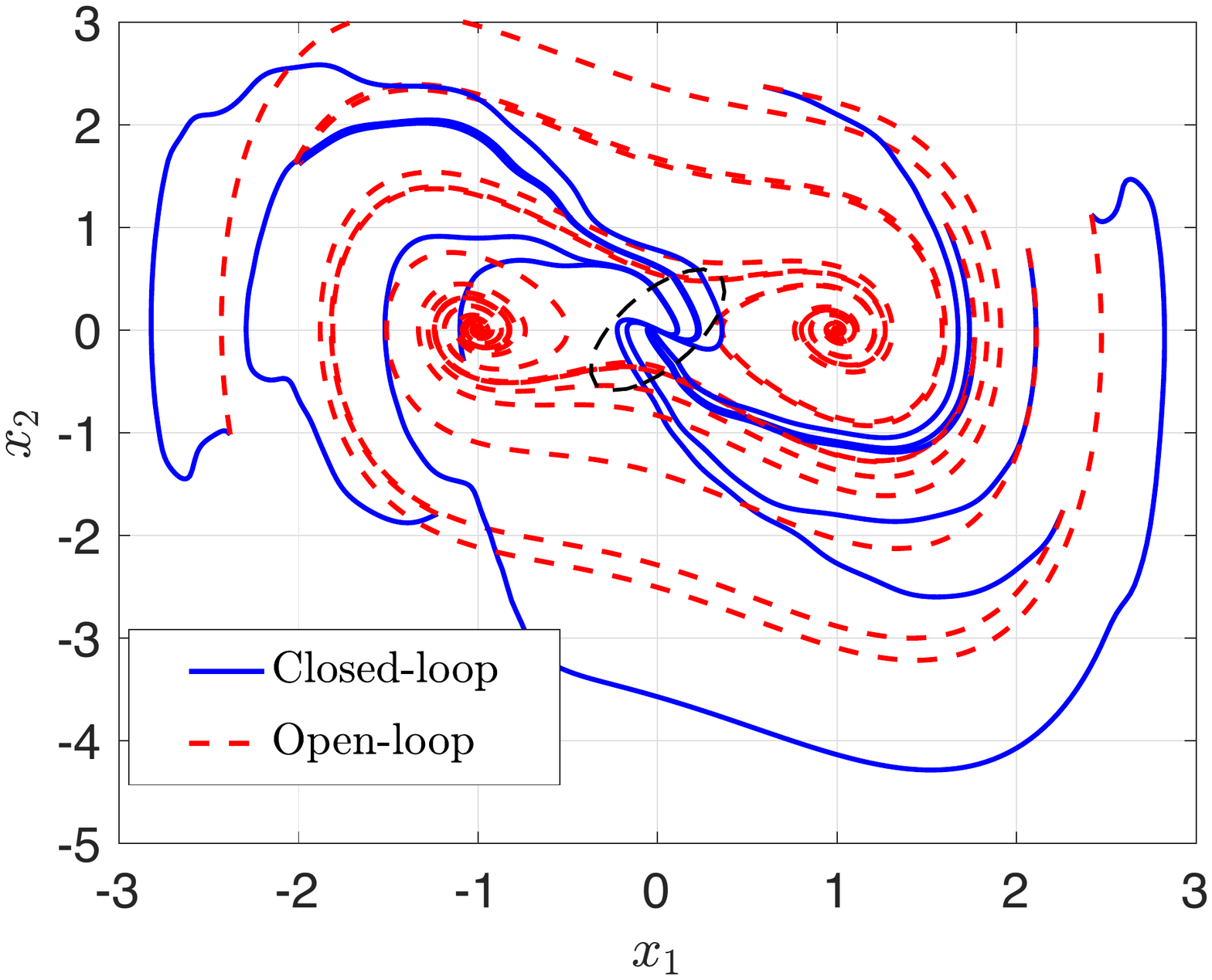}
\caption{Trajectories in 2-D space}
\label{fig:vdp2_2}
\end{subfigure}
\caption{Optimal Control of Duffing Oscillator.}
\end{figure}
\subsection{  Controlled 3D Vanderpol Oscillator}

\begin{equation}
\begin{aligned}
    \dot{x}_1 &= x_2\\
    \dot{x}_2 &= -x_1 + x_2 - x_3 - x_1^2x_2 \\
    \dot{x}_3 &= x_3 -x_3^2 + 0.5 u.
\end{aligned}
\end{equation}
For  this example, we are using 512 Gaussian RBF with $\sigma = 0.14$, and the centers of basis functions are distributed uniformly within the range of $D= [-1,\; 1]\times [-1,\;1]\times [-1,1]$. The cost function is chosen to be $\bx^\top \bx+u^2$. 
 For the approximation of P-F operator, we applied NSDMD algorithm using one-step time-series data with $5e4$ initial conditions, $\Delta t= 0.01$. Fig. \ref{fig:vdp2_1} shows the results for the optimal control of 3D oscillator system with 10 initial conditions.  
 \begin{figure}[htbp]
\centering
\begin{subfigure}[b]{0.35\textwidth}
\includegraphics[width=\textwidth]{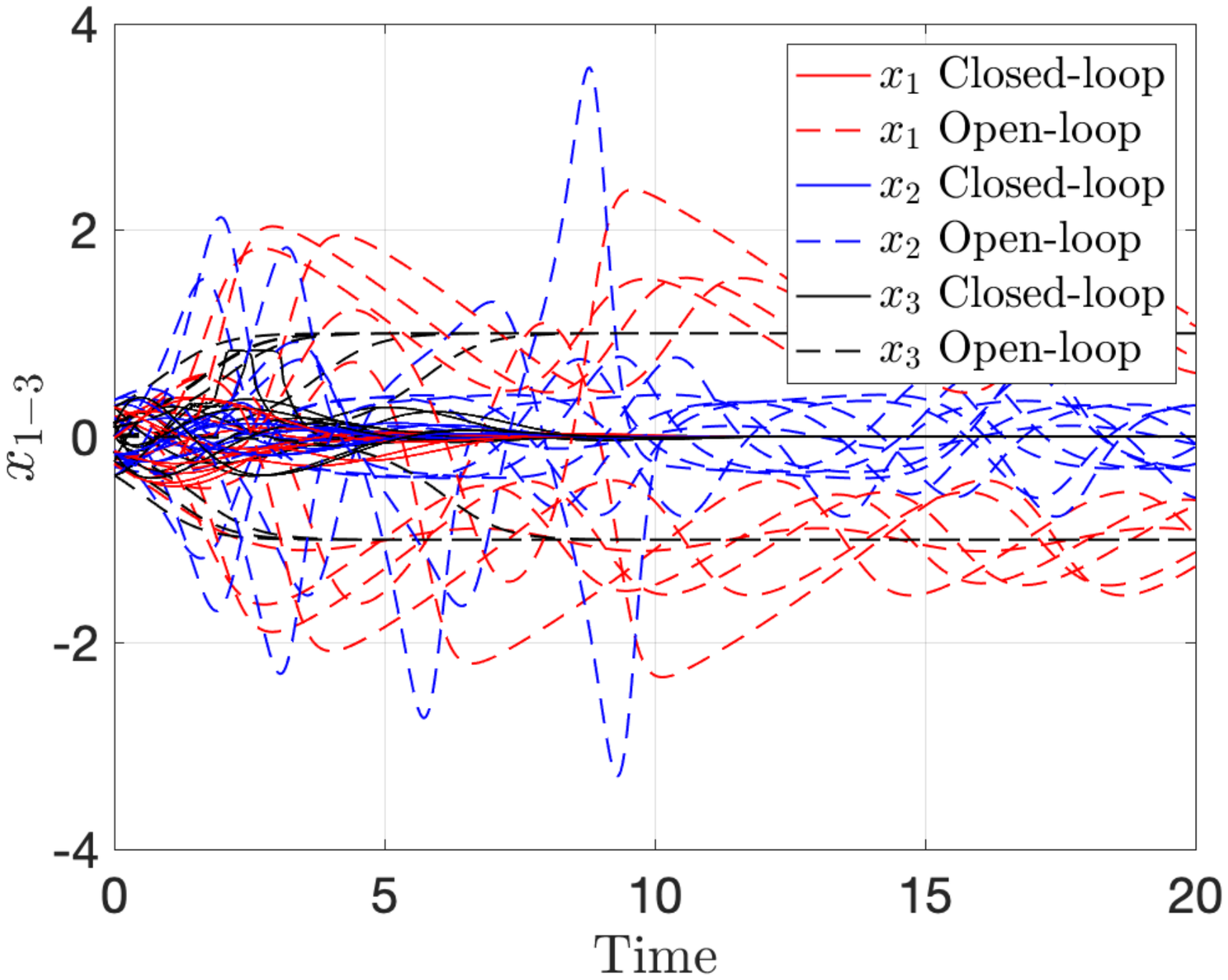}
\caption{$x_{1\sim 2}$ vs $t$}
\label{fig:vdp2_1}
\end{subfigure}
\begin{subfigure}[b]{0.35\textwidth}
\includegraphics[width=\textwidth]{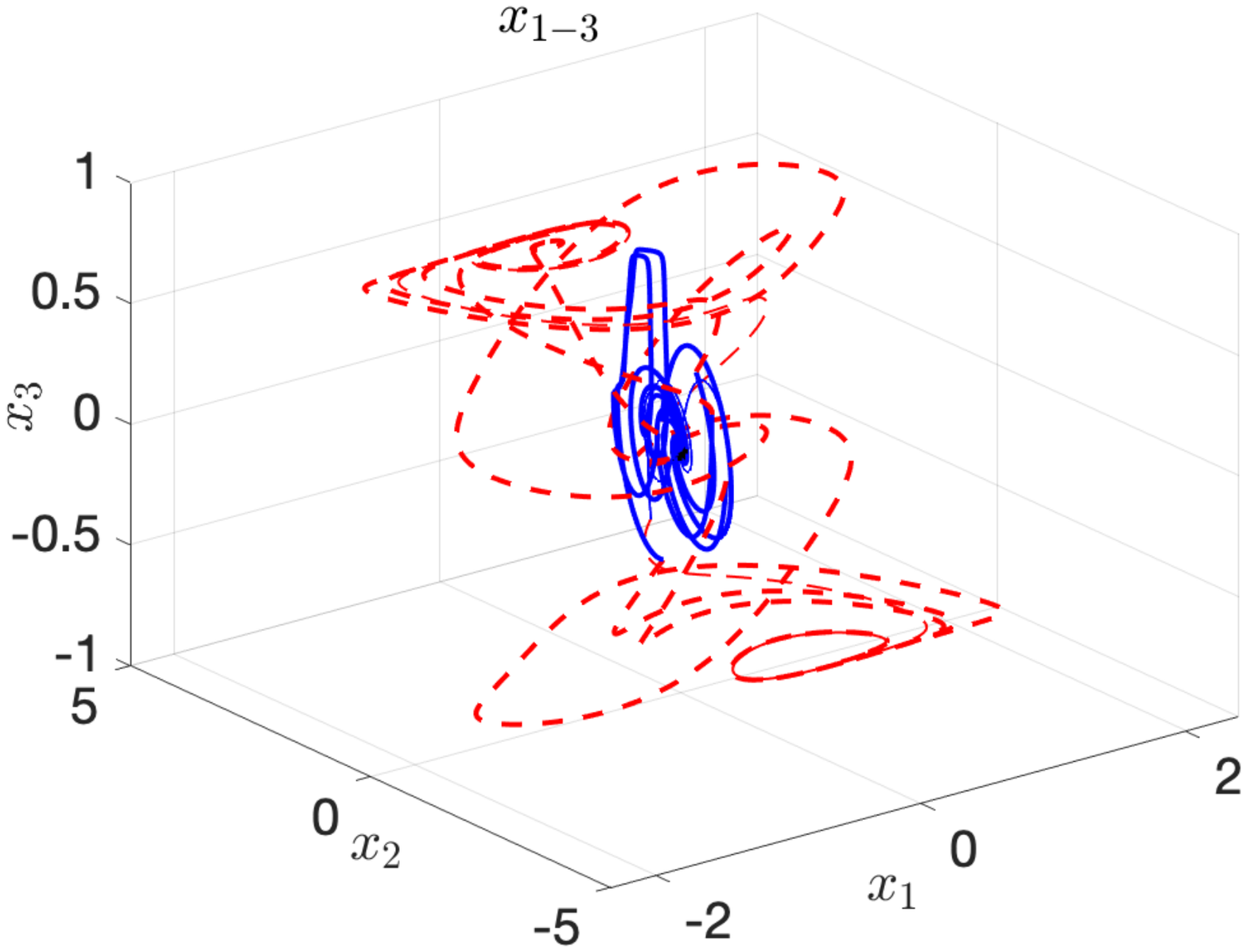}
\caption{Trajectories in 3-D space}
\label{fig:vdp2_2}
\end{subfigure}
\caption{Optimal Control of Oscillator}
\end{figure}

\section{Conclusion}\label{section_conclusion}
In this paper, we have provided a convex optimization-based formulation for the infinite horizon optimal control problem in the dual density space. We provided a data-driven approach for the computation of optimal control. The data-driven approach is based on the approximation of the P-F and Koopman operator for the finite-dimensional approximation of the convex optimization problem. Future research efforts will focus on the development of a computationally efficient numerical scheme and the choice of appropriate basis function for the implementation of the developed algorithm to system with large dimensional state space.

\bibliographystyle{IEEEtran}
\bibliography{ref1,ref}

\end{document}